  \newcommand{\R}{\mathbb{R}}
\newcommand{\C}{\mathbb{C}}
\newtheorem{thm}{Theorem}
\newtheorem{rmk}{Remark}
\newtheorem{coro}{Corollary}
\newtheorem{lem}{Lemma}
\title{Perturbations of symmetric elliptic Hamiltonians of degree four in a complex domain}
\author{
Bassem Ben Hamed\\
Institut Sup\'erieur d'Electronique et de Communication de Sfax, \\Route Menzel Chaker km 0.5, BP 261, 3038 Sfax, Tunisie.\\
E-mail: \texttt{bassem.benhamed@gmail.com}\\
\\
Ameni Gargouri\\
Facult\'e des Sciences de Sfax, D\'epartement de Math\'ematiques,\\
BP 1171, 3000 Sfax, Tunisie.\\
E-mail: \texttt{ameni.gargouri@gmail.com}\\
\\
Lubomir Gavrilov\\
Institut de Math\'{e}matiques de Toulouse, UMR 5219\\
Universit\'{e}  de Toulouse,  31062 Toulouse,  France.\\
E-mail: \texttt{lubomir.gavrilov@math.univ-toulouse.fr}}
\date{}
\begin{document}
\maketitle
\date
\begin{abstract}
The cyclicity of the exterior period annulus of  the asymmetrically perturbed Duffing oscillator is a well known problem extensively studied in the literature. In the present paper we provide a complete bifurcation diagram for the number of the zeros of the associated Melnikov function in a suitable complex domain. 
\end{abstract}

\section{Introduction}
\label{section0}

\noindent

Consider the asymmetrically perturbed Duffing oscillator
\vskip0.2cm
\begin{eqnarray}\label{f}
X_{\lambda}:
\left\{\begin{array}{ccl} \dot{x}&=&y  \\
\dot{y}&=&x-x^{3}+\nu x^2+ \lambda_{0}y+\lambda_{1}xy+\lambda_{2}x^{2}y
\end{array}\right.
\end{eqnarray}
in which $\nu, \lambda_i$ are small real parameters. For $\nu=\lambda_1=\lambda_2=\lambda_3=0$ the system
is integrable, with a first integral
$$
H= \frac{y^2}{2} - \frac{x^2}{2} + \frac{x^4}{4}
$$
 and its phase portrait is shown on fig.\ref{fig1}. Alternatively, the system (\ref{f}) defines a real plane foliation by the formula
\begin{equation}
 d(H - \nu \frac{x^3}{3})+ ( \lambda_{0}+\lambda_{1}x+\lambda_{2}x^{2}) ydx = 0
 \label{ff}
\end{equation}
 \begin{figure}
\begin{center}
 \def\svgwidth{9cm}
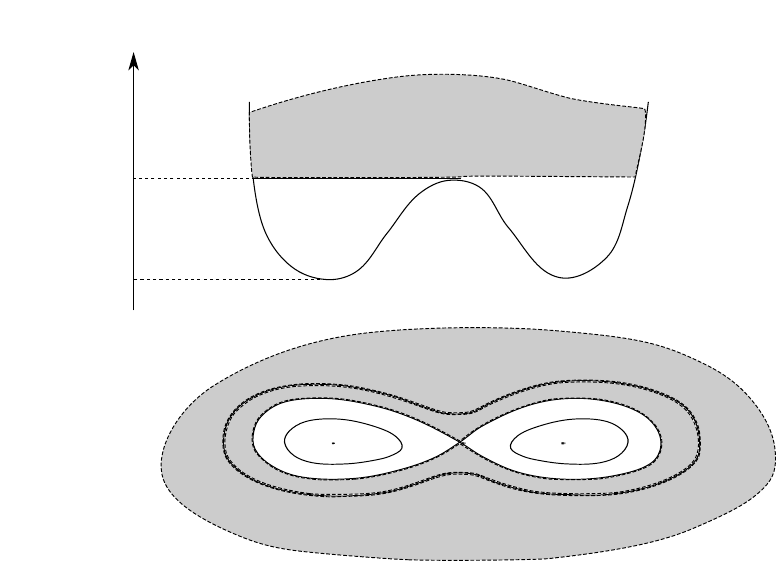
\end{center}
\caption{Phase portrait of $X_0$ and the graph of  $- \frac{x^2}{2} + \frac{x^4}{4}$}
\label{fig1}
\end{figure}

The cyclicity of the exterior period annulus of this system with respect to the perturbation (\ref{f}) is equal to two, as it has been shown
by Iliev and Perko \cite{Iliev99} and Li, Mardesic and Roussarie \cite{Li06}.
\begin{thm}
\label{cyclicity}
The cyclicity of the exterior period annulus $\{(x,y)\in \R^2: H(x,y)>0 \}$ of $dH=0$ with respect to the perturbation (\ref{f}) equals two.
\end{thm}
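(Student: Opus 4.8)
The plan is to reduce the cyclicity of the exterior period annulus to a sharp bound on the number of zeros of the associated Melnikov function on the interval $(0,+\infty)$ that parametrises it, to prove that bound by a complex-analytic argument, and to match it from below by an explicit perturbation. Write the perturbed foliation (\ref{ff}) as $dH+\varepsilon\omega_1+\varepsilon^2\omega_2+\cdots=0$ with $(\nu,\lambda_0,\lambda_1,\lambda_2)=O(\varepsilon)$, so that $\omega_1$ is a multiple of $-\nu\,d(x^3/3)+(\lambda_0+\lambda_1 x+\lambda_2 x^2)y\,dx$; then the first Poincar\'e--Pontryagin--Melnikov function along the exterior annulus is $M_1(h)=\oint_{\Gamma_h}\omega_1=\lambda_0 I_0(h)+\lambda_1 I_1(h)+\lambda_2 I_2(h)$, where $\Gamma_h\subset\{H=h\}$ is the oval for $h>0$ and $I_k(h)=\oint_{\Gamma_h}x^k y\,dx$. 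Since $d(x^3/3)$ is exact the conservative parameter $\nu$ is invisible to first order, and since $\Gamma_h$ is invariant under $x\mapsto-x$ we have $I_1\equiv0$; thus $M_1\in\langle I_0,I_2\rangle$, and as $\{I_0,I_2\}$ is a Chebyshev system on $(0,+\infty)$ a perturbation with $(\lambda_0,\lambda_2)\neq(0,0)$ yields at most one limit cycle.

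The second limit cycle can only come from higher order. On the locus $\lambda_0=\lambda_2=0$, where $M_1\equiv0$, the form $\lambda_1\,xy\,dx$ is relatively exact over the exterior annulus — this is the very meaning of $I_1\equiv0$ — so Fran\c{c}oise's recursion applies; combined with the Picard--Fuchs system satisfied by $I_0$ and $I_2$, a $2\times2$ Fuchsian system with regular singular points only at $h=0,\,-\tfrac14,\,\infty$, this exhibits the entire sequence of Melnikov functions inside a three-dimensional space $\mathcal E=\langle I_0,I_2,\Phi\rangle$, where $\Phi$ is the normalised integral carrying the $\nu$- and $\lambda_1$-directions (concretely $\Phi(h)=\partial_\nu\big|_{\nu=0}\oint_{\Gamma_h^\nu}xy\,dx$, an Abelian integral on the same elliptic curve $\{H=h\}$). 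By the standard reduction of cyclicity to non-oscillation of Melnikov functions it then suffices to prove that $\mathcal E$ is an extended complete Chebyshev system of dimension three on $(0,+\infty)$, i.e. that every non-zero $M=c_0 I_0+c_2 I_2+c_1\Phi$ has at most two zeros there.

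This non-oscillation property is the main point, and it is where the complex domain enters. Real monotonicity/Wronskian arguments degenerate at the separatrix $h=0$, so instead I would continue $I_0,I_2,\Phi$ to multivalued holomorphic functions on $\C\setminus\{0,-\tfrac14\}$, compute their monodromy around $h=0$ and $h=-\tfrac14$ from the Picard--Fuchs system (quasi-unipotent, the logarithm at $h=0$ being produced by the figure-eight separatrix), and find the asymptotics of $M$ as $h\to 0^+$ (logarithmic) and as $h\to+\infty$ (fractional-power growth, $\infty$ being a regular singular point and the annulus unbounded). Then, for a domain $D\subset\C$ containing $(0,+\infty)$ obtained by suitable cuts from $0$ to $-\tfrac14$ and out to infinity, the argument principle gives the number of zeros of $M$ in $D$ as $(2\pi)^{-1}$ times the increment of $\arg M$ along $\partial D$; choosing the cuts so that $M$ has controlled argument on each arc of $\partial D$, the monodromy and asymptotic data bound that increment by $4\pi$, so $M$ has at most two zeros. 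Carrying this out uniformly in $(c_0,c_2,c_1)$ — matching branches across the cut at $h=0$ and controlling the behaviour at $\infty$ — is the hard part, and is exactly the "complete bifurcation diagram for the zeros of the Melnikov function in a complex domain" announced in the abstract.

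For the lower bound I would realise two hyperbolic limit cycles: pick $\lambda_0,\lambda_2$ so that $\lambda_0 I_0+\lambda_2 I_2$ has a simple zero $h_\ast\in(0,+\infty)$, then turn on small $\lambda_1$ and $\nu$ so that the $\Phi$-term creates a second simple zero near $h=0$; applying the implicit function theorem to the displacement map, each simple zero of the resulting element of $\mathcal E$ becomes a hyperbolic limit cycle of $X_\lambda$. Together with the upper bound this gives cyclicity exactly two.
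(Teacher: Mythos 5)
The first thing to note is that the paper does not prove Theorem \ref{cyclicity} at all: it is quoted from Iliev--Perko \cite{Iliev99} and Li--Marde\v{s}i\'c--Roussarie \cite{Li06}, and the paper's own contribution is the complementary analysis in the complex domain. Your overall architecture (reduce to a three-dimensional space of Abelian integrals, prove a Chebyshev property on $(0,+\infty)$, realise the lower bound by simple zeros) does match the architecture of those cited proofs, and the first-order reduction to $\langle I_0,I_2\rangle$ is correct. But the central step of your plan contains a genuine gap.

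You propose to prove that every non-zero $M=c_0I_0+c_2I_2+c_1\Phi$ has at most two zeros on $(0,+\infty)$ by applying the argument principle in a complex domain containing $(0,+\infty)$, cut along the negative real axis, and bounding the increment of $\arg M$ along the boundary by $4\pi$. This cannot work, and the paper says so explicitly. Its main result (Theorem \ref{main}, built on Lemma \ref{z3}) is that the sharp bound for the number of zeros of $M=\lambda_0I_0+\lambda_2I_2+\lambda_4I_4'$ in ${\cal D}=\C\setminus(-\infty,0]$ is \emph{three}, attained on a whole open region of the parameter space (fig.\ref{bset}): after the normalisation $F=(4h+1)M/I_0$ the argument already increases by about $3\pi$ along a large circle, and the branch cut can contribute up to $2\pi$ more. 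The paper concludes that it ``seems impossible to deduce Theorem \ref{cyclicity} from Theorem \ref{main} by making use of complex methods only.'' The bound of two on the real interval is obtained in \cite{Iliev99}, \cite{Iliev98} by real methods --- Picard--Fuchs equations combined with Rolle's theorem --- which exploit monotonicity information invisible to a winding-number count over ${\cal D}$; shrinking your domain to exclude the extra complex zeros would just reinstate the real-domain problem. Two further points are glossed over: (i) the third generator of the space of higher-order Melnikov functions is the second-kind integral $I_4'$ as in (\ref{mk}), and identifying your $\Phi=\partial_\nu\oint xy\,dx$ with an element of $\langle I_0,I_2,I_4'\rangle$ is a nontrivial computation via Fran\c{c}oise's recursion and Picard--Fuchs, not an assertion; (ii) cyclicity with respect to the full multi-parameter family (\ref{f}) requires principalizing the Bautin ideal and showing the leading term of the displacement map has the form (\ref{mk}), which is the content of \cite{Li06} rather than of a ``standard reduction'' valid for one-parameter slices.
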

\begin{rmk}
\label{rem1}
The above Theorem claims that from any compact, contained in the open exterior period annulus $\{(x,y)\in \R^2: H(x,y)>0 \}$, bifurcate at most two limit cycles. It says nothing about the limit cycles bifurcating from the separatrix eight-loop or from infinity (i.e. the equator of the Poincar\'e sphere).
\end{rmk}

Let $\{\gamma(h)\}_h$ be the continuous family of exterior ovals of the non-perturbed system, where  $$\gamma(h)~\subset\{H=h\}$$ and consider the complete elliptic integrals
\noindent
\begin{eqnarray}\label{3}
I_{i}&=&\oint_{\gamma(h)} x^{i}y dx.
\end{eqnarray}
It has been shown in \cite{Iliev99}, that if we restrict our attention to a one parameter deformation
$$
\lambda_i=\lambda_i(\varepsilon), \nu= \nu (\varepsilon)
$$
then the first non-vanishing Poincar\'e-Pontryagin-Melnikov function $M_k $ (governing the bifurcation of limit cycles) is given by a linear combination of the complete elliptic integrals of first and second kind $I_0,I_2, I_4'$
\begin{equation}
\label{mk}
M_k(h)= \lambda_{0k}I_0(h)+ \lambda_{2k}I_2(h) + \lambda_{4k}I_4'(h) .
\end{equation}
It is shown further, by making use of Picard-Fuchs equations combined with Rolle's theorem in a real domain, that the space of elliptic integrals of first and second kind $I_0,I_1,I_4'$ satisfy the Chebishev property. The method is described in details in \cite[Iliev]{Iliev98} .
This result is further generalized for multi-parameter deformations. It turns out that the Bautin ideal associated to the deformation can be always principalized (this is a general fact), and that the leading term of the displacement map is given by a function of the form (\ref{mk}), which completes the proof of Theorem \ref{cyclicity}, see  \cite{Li06} for details.

The purpose of the present paper is to study the number of the zeros 
of
 the family $\{I_0,I_1,I_4'\}$ in the   complex domain ${\cal D} = {\mathbb C} \setminus (-\infty,0]$. We use the well known Petrov method which is based on the argument principle. To find the exact number of zeros we construct the bifurcation diagram of zeros of $M_k$ in ${\cal D}$  in the spirit of \cite[fig.4]{Gav01}. The result is summarized in Theorem \ref{main}. This gives an information on the complex limit cycles of the system, and imples in particular that the number of corresponding limit cycles can not exceed three. It can be also seen as a complex counterpart of Theorem \ref{cyclicity}.

Our primary motivation was that the complex methods we use, are necessary to understand the bifurcations from the separatrix eight-loop, see Remark \ref{rem1} above.  Another reason is, that the complexity of the bifurcation set of $M_k$ in a complex domain is directly related to the number of the zeros of $M_k$. This observation can be possibly  generalized to higher genus curves.

The paper is organized as follows. In section \ref{section1} we recall some known  Picard-Fuchs equations, which will be used later. The monodromy of the Abelian integrals, based on the classical Picard-Lefschetz theory is described in section \ref{monodromy}.
The Petrov method is then applied in section \ref{zeros}. The main result is that the principal part of the first rerun map can have at most four zeros in a complex domain, a result which is not optimal - see Lemma \ref{z3}. The exact upper bound for the number of the zeros in a complex domain turns out to be three. This result, together with the bifurcation diagram of zeros in a complex domain is given in section \ref{section5}.

\section{Picards-Fuchs equations }
\label{section1}
The results of this section are known, or can be easily deduced, see \cite{Iliev99, jezo94, Zol91}.

First we note that the affine algebraic curve
$$
\Gamma_h= \{(x,y)\in \C: H(x,y)=h \}
$$
is smooth for $h \neq 0,-1/4$ and has the topological type of a torus with two removed points $\infty^\pm$ (at "infinity"). Its homology group is therefore of rang three, the corresponding De Rham group has for generators the (restrictions of) polynomial differential one-forms
$$
ydx, \;xy dx, \;x^2ydx 
$$
which are also generators of the related Brieskorn-Petrov $\C[h]$-module \cite{gavr98}.

Because of the symmetry $(x,y)\rightarrow (\pm x,y)$ the Abelian integrals $I_{2k+1}(h)$ vanish identically, while $I_{2k}$, as well their derivatives can be expressed as linear combinations of $I_0, I_2$, with coefficients in the field $\C(h)$.

\begin{lem}
\noindent\\The integrals $I_i$, $i=0,2$, satisfy the following system of Picard-Fuchs:
\begin{eqnarray*}
I_0(h)&=&\frac{4}{3}hI'_0(h)+\frac{1}{3}I'_2(h) \\
I_2(h)&=&\frac{4}{15}hI'_0(h)+\left(\frac{4}{5}h+\frac{4}{15}\right)I'_2(h)\\
(4h+1)I'_4(h)&=&4hI_0(h)+5I_2(h)\\
4h(4h+1)I''_0(h)&=&-3I_0(h).
\end{eqnarray*}
\end{lem}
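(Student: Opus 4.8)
The plan is to derive all four identities from the single observation that the one-forms $ydx$, $xydx$, $x^2ydx$ generate the relevant De Rham / Brieskorn–Petrov module, together with the fact that exact forms integrate to zero over the closed ovals $\gamma(h)$. Concretely, write $y^2 = 2h + x^2 - x^4/2$ on $\Gamma_h$, so that $y\,dx$ and $x^iy\,dx$ can be differentiated in $h$ under the integral sign: $\partial_h(x^iy\,dx) = x^i\,dx/y$. First I would express the "second-kind" integrals $\oint x^i\,dx/y$ back in terms of $I_0', I_2'$ (which is exactly $\oint x^i\,dx/y$ for $i=0,2$) and, where an $x^4$ appears, reduce it using $x^4 = 2x^2 + 4h - 2y^2$ on $\Gamma_h$; the term $\oint y\,dx = I_0$ reappears, giving linear relations between $I_0, I_2$ and $I_0', I_2'$. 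To get these relations in the cleanest form one integrates by parts: from $d(x^{k}y^2) = 2x^k y\,y' dx + k x^{k-1}y^2 dx$ and $yy' = (x - x^3)/... $ — more precisely using $2y\,dy = (2x - 2x^3)dx$ on the curve, i.e. $d(x^k \cdot (\text{stuff}))$ — one produces, for $k=1$ and $k=3$, two independent linear combinations of $I_0, I_2, I_4$ that vanish. Solving the resulting linear system for $I_0$ and $I_2$ in terms of $I_0'$ and $I_2'$ yields the first two stated identities (the coefficients $4/3, 1/3, 4/15, 4/5+ \cdots$ are just the entries of the inverse of a $2\times 2$ matrix with polynomial entries in $h$).

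For the third identity, differentiate $I_4(h) = \oint x^4 y\,dx$ in $h$ to get $I_4'(h) = \oint x^4\,dx/y$, then use $x^4/y = (2x^2 + 4h - 2y^2)/y$ and $\oint y\,dx = I_0$, together with the previously obtained expressions for $\oint x^2 dx/y = I_2'$ and $\oint dx/y = I_0'$; after substituting the already-derived Picard–Fuchs relations for $I_0'$ and $I_2'$ in terms of $I_0, I_2$ and clearing the denominator $4h+1$, one lands on $(4h+1)I_4'(h) = 4hI_0(h) + 5I_2(h)$. The fourth identity, the scalar second-order ODE $4h(4h+1)I_0''(h) = -3I_0(h)$, then follows by eliminating $I_2$ (and $I_2'$) from the first two relations: differentiate the first relation once more in $h$, substitute $I_2', I_2''$ from the system, and simplify; alternatively, observe that $I_0$ must satisfy the second-order ODE whose characteristic/indicial data match the known behaviour of the elliptic integral at the singular points $h = 0, -1/4, \infty$, which pins down the coefficients uniquely up to the normalization already fixed by the leading term.

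The only genuinely delicate point is bookkeeping: one must be careful that every "integration by parts" step discards a form that is genuinely exact \emph{on the affine curve} $\Gamma_h$ (so that its integral over the closed cycle $\gamma(h)$ vanishes) — here the two punctures $\infty^\pm$ play no role since $\gamma(h)$ is a compact cycle avoiding them — and one must keep track of the factor $2$ coming from $y^2 = 2h + x^2 - x^4/2$ versus the normalization $H = y^2/2 - x^2/2 + x^4/4$. I expect the main obstacle to be purely computational: carrying out the reduction of $\oint x^6\,dx/y$-type terms (which appear transiently when differentiating $I_4$) back down to the basis $\{I_0', I_2'\}$ without sign or coefficient errors, and then verifying that the linear algebra indeed produces exactly the stated rational coefficients. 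Since all of this is classical and the identities are quoted from \cite{Iliev99, jezo94, Zol91}, the write-up can reasonably present the two integration-by-parts relations, invert the $2\times2$ system, and leave the remaining substitutions to the reader.
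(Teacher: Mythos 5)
The paper gives no proof of this lemma at all: Section \ref{section1} opens with ``The results of this section are known, or can be easily deduced, see \cite{Iliev99, jezo94, Zol91}'' and the equations are simply quoted. So there is no argument in the paper to compare yours against; your proposal has to be judged on its own. Its overall strategy is the standard one from those references and it does work: on $\Gamma_h$ one has $y^2=2h+x^2-x^4/2$, hence $\partial_h y=1/y$ and $I_i'=\oint_{\gamma(h)}x^i\,dx/y$; writing $y=y^2/y$ gives the reduction $I_i=2hI_i'+I_{i+2}'-\tfrac12 I_{i+4}'$, and combining this with integration-by-parts relations and solving the resulting linear system produces exactly the four stated identities. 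I checked that this pipeline reproduces all four equations with the stated coefficients (e.g.\ inverting the first two gives $4h(4h+1)I_0'=(12h+4)I_0-5I_2$ and $(4h+1)I_2'=-I_0+5I_2$, from which the third and fourth equations follow by substitution and one more differentiation).

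One concrete slip: the exact form $d(x^ky^2)$ that you propose to integrate by parts yields only a trivial identity, since on $\Gamma_h$ both $2x^kyy'\,dx=2x^k(x-x^3)\,dx$ and $kx^{k-1}y^2\,dx=kx^{k-1}(2h+x^2-x^4/2)\,dx$ are polynomial one-forms in $x$ alone, whose integrals over the closed cycle $\gamma(h)$ vanish identically. The useful exact forms are $d(x^ky)$, which give
\begin{equation*}
0=\oint_{\gamma(h)}d(x^ky)=kI_{k-1}+I_{k+1}'-I_{k+3}' ,
\end{equation*}
so that $k=1$ yields $I_4'=I_0+I_2'$ and, inserted into $I_0=2hI_0'+I_2'-\tfrac12 I_4'$, immediately gives the first equation $I_0=\tfrac43 hI_0'+\tfrac13 I_2'$; the case $k=3$ similarly handles $I_6'$ and gives the second equation. (Alternatively $d(x^ky^3)$ works, but then you must also reduce $y^3\,dx$ via the curve equation.) Your hedge ``$d(x^k\cdot(\text{stuff}))$'' suggests you sensed this; with $d(x^ky)$ in place of $d(x^ky^2)$ the derivation goes through exactly as you describe, and no $\oint x^6\,dx/y$ bookkeeping beyond the $k=3$ relation is needed.
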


The above equations imply the following asymptotic expansions near $h=0$ (they agree with the Picard-Lefshetz formula)
\newpage
\begin{lem}\label{14}
\noindent The integrals $I_i$, $i=0,2$, and $I'_4$ have the following asymptotic expansions in the neighborhood of $h=0$:
\noindent
\begin{eqnarray*}
I_{0}(h)&=&(-h+\frac{3}{8}h^{2}-\frac{35}{64}h^{3}+...)\ln h+\frac{4}{3}+a_{1}h+a_{2}h^{2}+...\\
I_{2}(h)&=&(\frac{1}{2}h^{2}-\frac{5}{8}h^{3}-\frac{315}{256}h^{4}...)\ln h+\frac{16}{15}+4h+b_{2}h^{2}+...\\
I'_{4}(h)&=&(-\frac{3}{2}h^{2}+\frac{35}{8}h^{3}-\frac{471}{256}h^{4}+...)\ln h+\frac{16}{3}+4h+(4a_{1}+5b_{2}-\frac{304}{3})h^{2}+...\\
\end{eqnarray*}
\end{lem}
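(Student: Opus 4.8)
The plan is to extract all three expansions directly from the Picard--Fuchs system of the preceding lemma, using only one non-formal input: the values of the integrals at $h=0$, or equivalently the Picard--Lefschetz description of the monodromy at the saddle level. The engine is the last, decoupled equation $4h(4h+1)I_0''+3I_0=0$. This is a second order Fuchsian equation whose only finite singular points are $h=0$ and $h=-1/4$, both regular; at $h=0$ the indicial equation is $r(r-1)=0$, so the exponents are $0$ and $1$. I would run the Frobenius algorithm: the exponent-one solution is holomorphic, $\varphi_1(h)=h+\sum_{k\ge 2}c_kh^k$, its coefficients governed by the three-term recursion $4k(k+1)c_{k+1}+\bigl(16k(k-1)+3\bigr)c_k=0$, which gives $c_2=-\tfrac38$, $c_3=\tfrac{35}{64}$, and so on; the exponent-zero solution carries a logarithm, $\varphi_0(h)=\psi(h)+\kappa\,\varphi_1(h)\ln h$ with $\psi$ holomorphic, $\psi(0)=1$, and the obstruction in the recursion at the resonance forces $\kappa=-\tfrac34$.

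Now $I_0$ is a period, hence a solution of this equation. Since, as $h\to 0^+$, the exterior oval $\gamma(h)$ degenerates onto the eight-loop of $\{H=0\}$, the limit $I_0(0)$ is finite, and an elementary integration over the two homoclinic lobes evaluates it ($I_0(0)=\tfrac43$ in the paper's normalisation). Therefore $I_0=\tfrac43\,\varphi_0+\text{(const)}\,\varphi_1$, which gives at once $I_0(h)=-\varphi_1(h)\ln h+\bigl(\tfrac43+a_1h+a_2h^2+\cdots\bigr)$, i.e. precisely the claimed expansion: the logarithmic coefficient is the completely explicit series $-\varphi_1$, while the coefficients $a_i$ of the holomorphic part beyond the constant $\tfrac43$ remain free, being the genuine initial data invisible to the differential equation. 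The shape of the logarithmic term is also the content of the Picard--Lefschetz formula for a loop around $h=0$ (the monodromy of $I_0$ is proportional to the period along the cycle vanishing at the saddle, which is holomorphic and $O(h)$), so this can serve as an independent check, or even to obtain $I_0(0)$.

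For $I_2$ and $I_4'$ I would feed the expansion of $I_0$ into the remaining relations. From $I_0=\tfrac43 hI_0'+\tfrac13 I_2'$ one gets $I_2'=3I_0-4hI_0'=\bigl(4h\varphi_1'-3\varphi_1\bigr)\ln h+\bigl(4+(4-a_1)h+\cdots\bigr)$; integrating termwise with $\int_0^h s^n\ln s\,ds=\tfrac{h^{n+1}}{n+1}\ln h-\tfrac{h^{n+1}}{(n+1)^2}$ and inserting the integration constant $I_2(0)=\tfrac{16}{15}$ (again elementary) yields $I_2(h)=\bigl(\tfrac12h^2-\tfrac58h^3+\cdots\bigr)\ln h+\tfrac{16}{15}+4h+b_2h^2+\cdots$, where the coefficient of $h$ is forced to be $4$ independently of the free $a_1$, while $b_2$ is free; I would also verify that the second Picard--Fuchs relation holds identically, as a consistency check. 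Finally, writing $I_4'=(4hI_0+5I_2)(4h+1)^{-1}=(4hI_0+5I_2)\sum_{k\ge 0}(-4h)^k$ and multiplying out gives the expansion of $I_4'$, with $I_4'(0)=5I_2(0)=\tfrac{16}{3}$, coefficient of $h$ equal to $4$, and coefficient of $h^2$ an explicit affine function of $a_1$ and $b_2$.

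The whole argument is formal power-series bookkeeping once the two constants $I_0(0)=\tfrac43$ and $I_2(0)=\tfrac{16}{15}$ are known, so the step I would treat most carefully is precisely those two evaluations: one must integrate $\oint_{\gamma(h)}x^{2k}y\,dx$ in the limit $h\to 0^+$ over the \emph{entire} eight-loop, i.e. add the contributions of both homoclinic lobes with the correct orientation, and fix the normalisation so that these constants, together with the Picard--Lefschetz data, pin down everything that the Picard--Fuchs system leaves undetermined.
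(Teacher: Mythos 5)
Your proposal is correct and follows essentially the same route as the paper, whose entire ``proof'' is the remark that the Picard--Fuchs system of the preceding lemma implies these expansions (in agreement with the Picard--Lefschetz formula); you have simply filled in the Frobenius analysis at the regular singular point $h=0$ (exponents $0,1$, resonance constant $\kappa=-3/4$) and the propagation of the initial data through the first-order relations, all of which checks out. One caveat on the step you yourself flag as delicate: a direct evaluation of $\oint y\,dx$ over the \emph{full} eight-loop (both homoclinic lobes) gives $8/3$, twice the constant $4/3$ appearing in the statement, so reconciling the initial data with the paper's normalisation $\gamma(h)=\delta_0(h)+\delta_1(h)+\delta_{-1}(h)$ for the exterior oval indeed requires exactly the care you promise to take there.
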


\section{The monodromy of Abelian integrals}
\label{monodromy}
\begin{figure}
\begin{center}
 \def\svgwidth{9cm}
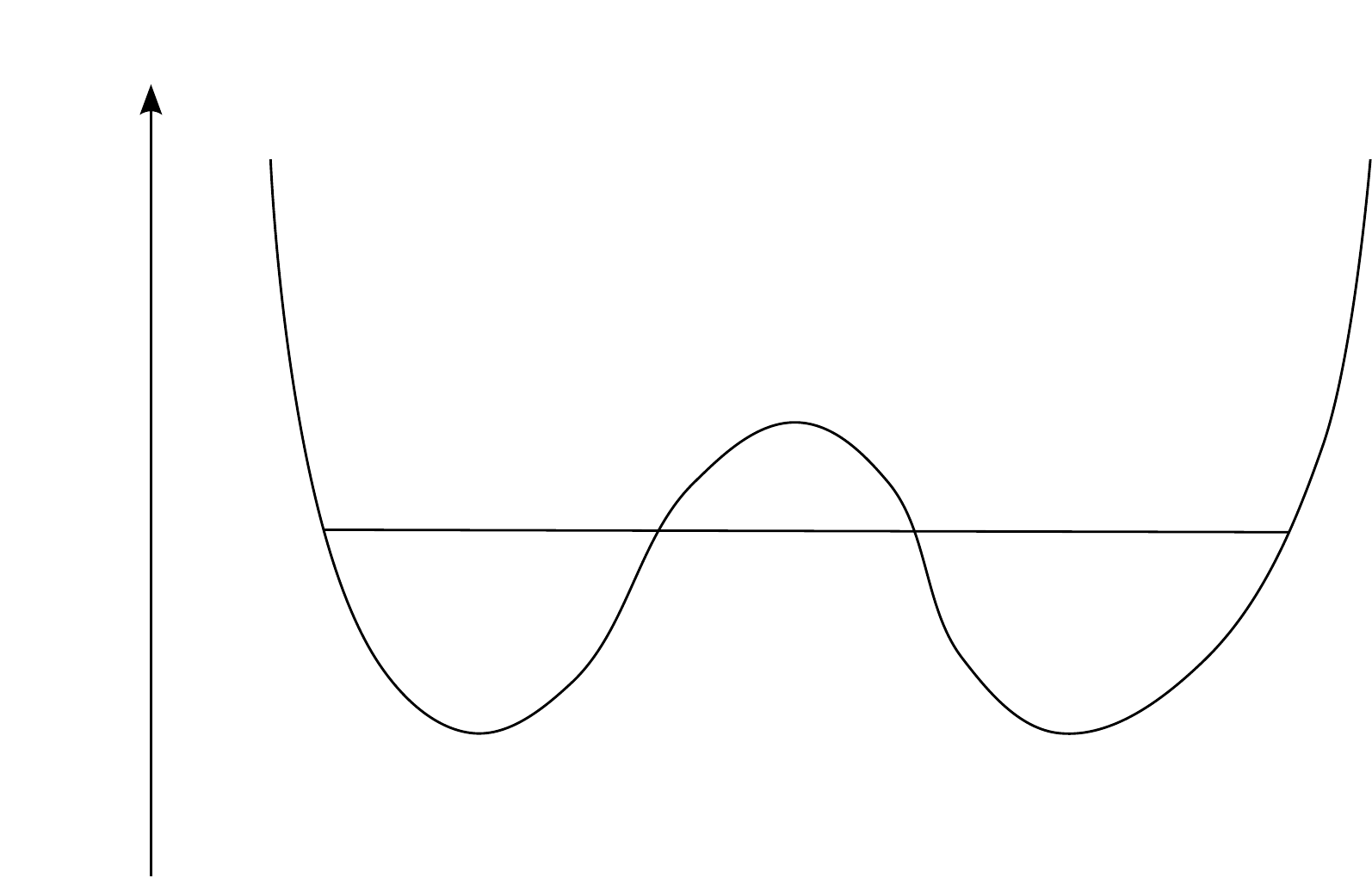
\end{center}
\caption{The vanishing cycles  $\delta_0(h), \delta_1(h), \delta_{-1}(h)$ for $-\frac14<h<0$}
\label{figdelta}
\end{figure}
The Abelian integrals $I(h)$ of the form (\ref{3}) are multivalued functions in $h\in \C$ which become single-valued analytic functions in the complex domain
$$
{\cal D}= \C \setminus [0,-\infty) .
$$
Along the segment $[0,-\infty)$ the integrals have a continuous limit when $h\in {\cal D}$ tends to a point $h_0\in [0,-\infty)$, depending on the sign of the imaginary part of $h$. Namely, if $Im (h)>0$ we denote the corresponding limit by $I^+(h)$, and when $Im (h)>0$ by $I^-(h_0)$. We use a similar notation for the continuous limits of loops $\gamma(h)$ when $h$ tends to the segment $[0,-\infty)$. We have therefore
$$
I^\pm(h)= \int_{\gamma^\pm(h)} \omega
$$
where $\omega$ is a polynomial one-form. The monodromy $I^+(h)-I^-(h)$, $ h\in [0,-\infty)$ depends therefore on the monodromy of $\gamma(h)$ which is expressed by the Picard-Lefscetz formula. Namely, for $h\in {\cal D}$, define the continuous families of closed loops
$$\delta_0(h), \delta_1(h), \delta_{-1}(h)
$$
which vanish at the singular points $(0,0), (0,1), (0,-1)$ when $h$ tends to $0$ or $-1/4$ respectively, and in such a way that
$Im (h) > 0$, see fig.\ref{figdelta}. This defines uniquely  the homology classes of the loops, up to an orientation. From now on we suppose that the loop $\gamma(h)$ for $h>0$ is oriented by the vector field $X_0$, and that the orientation of $\delta_0(h), \delta_1(h), \delta_{-1}(h)$ are chosen in such a way that
$$
\gamma(h) = \delta_0(h) + \delta_1(h)  + \delta_{-1}(h) , \, h \in \cal D .
$$
According to the definition of the vanishing cycles
\begin{equation}
\gamma^+(h) = \delta^+_0(h) + \delta^+_1(h) + \delta^+_{-1}(h) , h\in (-\infty,0] .
\label{gplus}
\end{equation}
and the Picard-Lefschetz formula implies 
\begin{equation}
\gamma^-(h)= - \delta^+_0(h) + \delta^+_1(h) + \delta^+_{-1}(h) , h\in [-1/4,0]
\label{gminus1}
\end{equation}
and
\begin{equation}
\gamma^-(h)= - \delta^+_0(h), h\in (-\infty, -1/4]
\label{gminus2}
\end{equation}
For a further use we note that
\begin{equation}
\delta^-_0(h)= \delta^+_0(h), h\in ( -1/4,+\infty)
\label{d0}
\end{equation}
\begin{equation}
\delta^-_1(h)= \delta^+_1(h), 
\delta^-_{-1}(h)= \delta^+_{-1}(h), 
h\in ( -\infty, 0)
\label{d1}
\end{equation}

\begin{figure}
\begin{center}
 \def\svgwidth{12cm}
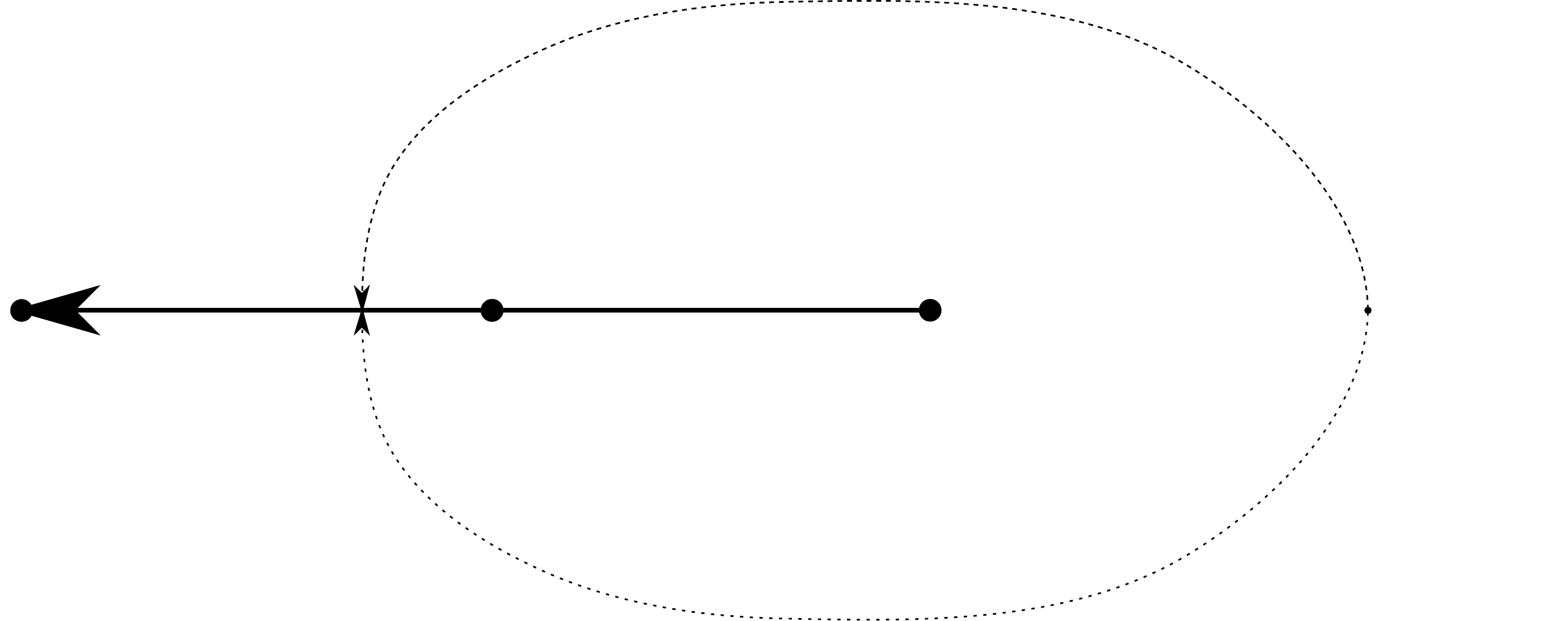
\end{center}
\caption{The analytic continuation of a cycle $\gamma(h)$ in the domain ${\cal D}$}
\label{figgamma}
\end{figure}
\section{The zeros of  the principal part of the first return map in a complex domain}
\label{zeros}
If he  first  Poincar\'e-Pontryagin-Melnikov function $M_1(h)$ is not identically zero, then
\begin{equation}
\label{first}
M_1(h) = \lambda_0 I_0(h) + \lambda_2 I_2, \lambda_i \in \R
\end{equation}
If $M_1=0$ the first non-vanishing Melnikov function $M_k$ can have a more complicated form, and its general form according to \cite{Iliev98}, \cite{Iliev99} is
\begin{equation}
\label{principal}
M(h) = \lambda_0 I_0(h) + \lambda_2 I_2+ \lambda_4 I'_4(h)(h), \lambda_i \in \R
\end{equation}
Following \cite{Li06}, we call 
 the Abelian integral $M(h)$ \emph{the principal part of 
the  first return map} of the system (\ref{f}), associated to the exterior period annulus of $X_0$.
\begin{lem}
The first non-vanishing Poincar\'e-Pontryagin-Melnikov function (\ref{first}) 
has at most two zeros in the complex domain ${\cal D}$.
\label{z1}
\end{lem}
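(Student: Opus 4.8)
The plan is to apply the Petrov variant of the argument principle to $M_1$ on the boundary of $\mathcal{D}$. Since $\lambda_0,\lambda_2$ are real and $I_0,I_2$ are real on $(0,+\infty)$, the function $M_1$ is real analytic, $M_1(\bar h)=\overline{M_1(h)}$; hence its non-real zeros in $\mathcal{D}$ come in conjugate pairs, and the increments of $\arg M_1$ along the two banks of the cut $(-\infty,0]$ are equal. By Lemma \ref{14} the integrals $I_0,I_2$, and hence $M_1$, extend continuously to $h=0$ and to $h=-\tfrac14$, so no small circles around these points enter the contour, and one obtains
\[
\#\{\text{zeros of }M_1\text{ in }\mathcal{D}\}=\frac{1}{\pi}\,\Delta_{(-\infty,0]}\arg M_1^{+}+\frac{1}{2\pi}\,\Delta_{\infty}\arg M_1 ,
\]
where $\Delta_{(-\infty,0]}$ denotes the variation of the argument as $h$ runs from $-\infty$ to $0$ along the upper bank.

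For the term at infinity I would use the Picard-Fuchs equation $4h(4h+1)I_0''=-3I_0$, whose indicial exponents at $h=\infty$ are $\tfrac34$ and $\tfrac14$, so that $I_0\sim c_1h^{3/4}$; combined with $I_2=7\!\int\! I_0-4hI_0+\mathrm{const}$ this gives $I_2\sim c_3h^{5/4}$ with $c_3\neq 0$. Hence, when $\lambda_2\neq 0$, $M_1\sim\lambda_2c_3h^{5/4}$ and $\Delta_{\infty}\arg M_1=\tfrac52\pi$; in the remaining case $\lambda_2=0$ one has $M_1=\lambda_0I_0$, which has no zero in $\mathcal{D}$ (apply the argument principle to $I_0$ alone, using $I_0>0$ on $(0,+\infty)$ and $I_0\sim c_1h^{3/4}$), so the lemma is trivial there. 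It therefore remains to bound $\Delta_{(-\infty,0]}\arg M_1^{+}$ from above.

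The argument of $M_1^{+}$ along the cut is controlled by the zeros of $\operatorname{Im}M_1^{+}$ on $(-\infty,0)$, together with the endpoint data $M_1^{+}(0)=\tfrac43\lambda_0+\tfrac{16}{15}\lambda_2\in\mathbb{R}$ and $\arg M_1^{+}(h)\equiv\tfrac14\pi\pmod{\pi}$ as $h\to-\infty$ (the latter from the $h^{5/4}$ behaviour on the upper bank). Using the Picard-Lefschetz formulas of Section \ref{monodromy}, namely relations (\ref{gplus})--(\ref{gminus2}), I would write $\operatorname{Im}M_1^{+}(h)$, up to the reflection-symmetric real parts of $I_0^{+},I_2^{+}$, as $\lambda_0\operatorname{Im}\!\oint_{\delta_0^{+}(h)}y\,dx+\lambda_2\operatorname{Im}\!\oint_{\delta_0^{+}(h)}x^2y\,dx$, an integral over the cycle $\delta_0(h)$ vanishing at the origin. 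On $(-\tfrac14,0)$ this cycle lies over the barrier interval $[-x_*(h),x_*(h)]$ and the integral equals, up to the factor $i$, the real integral $\int_{-x_*(h)}^{x_*(h)}(\lambda_0+\lambda_2x^2)\sqrt{\tfrac{x^4}{4}-\tfrac{x^2}{2}-h}\;dx$; monotonicity in $h$ of the weighted mean $\oint x^2(\cdots)\big/\oint(\cdots)$, whose range is $(0,\tfrac15)$, bounds the zeros there by one. On $(-\infty,-\tfrac14)$ the same combination is a genuine period of the elliptic curve $\Gamma_h$, which there carries two conjugate pairs of complex branch points, and I would bound its zeros by means of the Picard-Fuchs system of Section \ref{section1} together with a Rolle-type argument, using the asymptotics of Lemma \ref{14} at $h=-\tfrac14$ and $h=-\infty$ to fix the endpoint signs.

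Collecting these facts, the variation of $\arg M_1^{+}$ along the cut is at most $\tfrac34\pi$ in the generic cases; in the cases where $\operatorname{Im}M_1^{+}$ does vanish on the cut I would invoke the classical Chebyshev property of $\{I_0,I_2\}$ on $(0,+\infty)$ (see \cite{Iliev98,Iliev99}), which gives $M_1$ at most one real zero and thereby fixes the parity of $\#\{\text{zeros of }M_1\text{ in }\mathcal{D}\}$; combined with the crude estimate this rules out the value $3$ and leaves at most two. The hard part will be the analysis on $(-\infty,-\tfrac14)$: controlling the imaginary part of the period of the complex vanishing cycle $\delta_0(h)$ over an elliptic curve whose branch points are genuinely non-real. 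Everything else --- the contour, the reduction by the reflection symmetry, and the count at infinity --- is bookkeeping once this analytic input is available.
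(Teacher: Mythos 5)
Your overall strategy (Petrov's argument principle on $\partial\mathcal D$, with the contribution at infinity computed from the exponents $3/4$ and $5/4$ of $I_0$ and $I_2$) is the right one, but the treatment of the branch cut has a genuine gap, and it is precisely the point where the paper's proof takes a different, decisive turn. The paper does not apply the argument principle to $M_1$ itself but to $F=M_1/I_0=\lambda_2 I_2/I_0+\lambda_0$, which is legitimate because $I_0$ does not vanish in ${\cal D}$ (Lemma \ref{I0}). The payoff is that along the cut
$$
F^+(h)-F^-(h)=\lambda_2\,\frac{W(h)}{|I_0(h)|^2},
\qquad
W(h)=\det\left(\begin{array}{cc} \oint_{\gamma^+}x^2y\,dx & \oint_{\gamma^+}y\,dx\\ \oint_{\gamma^-}x^2y\,dx & \oint_{\gamma^-}y\,dx \end{array}\right),
$$
and the determinant of periods $W$ is single-valued (its monodromy cancels by Picard--Lefschetz), of moderate growth, hence rational --- in fact $W(h)=c\,h(4h+1)$ with $c\neq0$. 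So $\operatorname{Im}F$ vanishes exactly once on the open cut (at $-1/4$), the variation of $\arg F$ along each bank is at most $2\pi$, the variation along a large circle is close to $\pi$ (since $F\sim h^{1/2}$), and $2\pi Z\le 5\pi$ gives $Z\le 2$. In your version, $\operatorname{Im}M_1^+$ on the cut remains a genuine transcendental period (an integral over $2\delta_0$ on $(-1/4,0)$ and over $2\delta_0+\delta_1+\delta_{-1}$ on $(-\infty,-1/4)$), and you never actually count its zeros: the monotonicity of the weighted mean on $(-1/4,0)$ is asserted without proof, and the interval $(-\infty,-1/4)$ is explicitly deferred to an unspecified ``Rolle-type argument''. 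That missing count is the whole content of the lemma; dividing by the non-vanishing integral $I_0$ is what converts it into an elementary statement about the polynomial $h(4h+1)$.

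A second, independent flaw is the closing parity argument. With one zero of $\operatorname{Im}M_1^+$ on the cut your crude estimate allows $Z\le 3$, and $Z=3$ is realized by one real zero plus one complex conjugate pair; this configuration is perfectly consistent with the Chebyshev property of $\{I_0,I_2\}$ on $(0,\infty)$ (at most one real zero) and with the reflection symmetry $M_1(\bar h)=\overline{M_1(h)}$. So the step ``this rules out the value $3$'' does not follow, and the proposal as written does not reach the bound two even granting the unproven claims about the cut.
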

\begin{lem}
The principal part (\ref{principal}) of the  first return map  has at most four zeros in the complex domain ${\cal D}$.
\label{z3}
\end{lem}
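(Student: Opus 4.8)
\begin{pf}[Proof sketch]
The tool is Petrov's method: the argument principle applied to the analytic function $M$ on ${\cal D}$. Orient $\partial{\cal D}$ so that ${\cal D}$ lies on the left; it consists of a large circle $C_R=\{|h|=R\}$, the two banks $\Gamma^{+}$ (on which $M$ takes the boundary value $M^{+}$) and $\Gamma^{-}$ (value $M^{-}$) of the slit $(-\infty,0]$, and two small circles around the singular points $h=0$ and $h=-\tfrac14$. Since the zeros of $M$ in ${\cal D}$ are isolated, the argument principle gives $\#_{{\cal D}}(M)=\frac{1}{2\pi}\,\Delta_{\partial{\cal D}}\arg M$ (with small indentations around the finitely many boundary points at which $M^{\pm}$ vanishes, which only decrease the count of interior zeros). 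I would bound the three types of contributions separately and add them up.

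\emph{Contribution of $C_R$.} From the Picard--Fuchs system of Section~\ref{section1} one reads off the behaviour at $h=\infty$: $I_0$ and $I'_4$ grow like $h^{3/4}$ and $I_2$ like $h^{5/4}$, each times a bounded analytic function of $h^{-1/4}$. Hence $M(h)\sim c\,h^{5/4}$ if $\lambda_2\neq 0$ and $M(h)\sim c\,h^{3/4}$ otherwise, so $\Delta_{C_R}\arg M\to \tfrac52\pi$, respectively $\tfrac32\pi$, as $R\to\infty$; this winding at infinity contributes at most $\tfrac54$ to the count.

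\emph{Contribution of the banks.} By the reflection principle $M$ is real on $(0,+\infty)$ and $\overline{M^{+}(h)}=M^{-}(h)$ on the slit, so the contributions of $\Gamma^{+}$ and $\Gamma^{-}$ coincide and their sum equals $2\,\Delta_{\Gamma^{+}}\arg M^{+}$. The jump $M^{+}-M^{-}=2i\,\mathrm{Im}\,M^{+}$ is, by the Picard--Lefschetz formulas (\ref{gplus})--(\ref{gminus2}), an integral over the vanishing cycles: it equals $2\int_{\delta_0^{+}(h)}\omega$ for $h\in(-\tfrac14,0)$ and $\int_{2\delta_0^{+}(h)+\delta_1^{+}(h)+\delta_{-1}^{+}(h)}\omega$ for $h<-\tfrac14$, with $\omega=\lambda_0\,y\,dx+\lambda_2\,x^2y\,dx+\lambda_4\,\frac{x^4}{y}\,dx$. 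Since the Picard--Fuchs relations of Section~\ref{section1} are identities between one-forms (valid over any cycle), these periods are again linear combinations, with the same coefficients $\lambda_i$, of the periods of $y\,dx$, $x^2y\,dx$, $\frac{x^4}{y}\,dx$ over the cycle in question; hence $\mathrm{Im}\,M^{+}$ itself solves the homogeneous Picard--Fuchs system on each of $(-\tfrac14,0)$ and $(-\infty,-\tfrac14)$. Using the local expansions at $h=0$ (Lemma~\ref{14}), the analogous expansions at $h=-\tfrac14$ and at $h=\infty$, and Rolle's theorem --- exactly the mechanism behind the Chebyshev property of $\{I_0,I_2,I'_4\}$ on the positive axis and behind Lemma~\ref{z1} --- I would bound the number of sign changes of $\mathrm{Im}\,M^{+}$ on $(-\infty,0]$, and then invoke Petrov's lemma on the variation of the argument of boundary values of analytic functions to convert this into a bound on $\Delta_{\Gamma^{+}}\arg M^{+}$.

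\emph{Small circles and assembly.} Near $h=0$ and $h=-\tfrac14$ the integrals $I_0,I_2,I'_4$ are bounded (Lemma~\ref{14} and its counterpart at $-\tfrac14$), so $M$ extends continuously; away from the exceptional values of $(\lambda_0,\lambda_2,\lambda_4)$ for which $M$ vanishes at an endpoint the two small circles contribute $o(1)$, and those exceptional cases are handled separately, a boundary zero not adding to the interior count. Summing the three contributions yields $\#_{{\cal D}}(M)\le 4$. I expect the decisive step to be the bank estimate: controlling the oscillation of $\mathrm{Im}\,M^{+}$ along the \emph{whole} half-line $(-\infty,0]$, in particular matching the Picard--Fuchs/Rolle bounds across the intermediate singular point $h=-\tfrac14$ and ruling out spurious winding of $M^{+}$ around the origin, so that the banks contribute less than $2\pi\cdot 4-\tfrac52\pi$. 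The slack in this last inequality is precisely what keeps the present bound at $4$ rather than the optimal $3$ obtained later from the full bifurcation diagram.
\end{pf}
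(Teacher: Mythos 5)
Your overall framework (Petrov's method, i.e.\ the argument principle on $\partial{\cal D}$ split into a large circle, the two banks of the cut and small circles around $h=0$ and $h=-\tfrac14$) is the right one, and your asymptotics at infinity are correct. But there is a genuine gap exactly at the point you yourself flag as decisive: the bank estimate. Because you apply the argument principle to $M$ itself, the jump $M^{+}-M^{-}=2i\,\mathrm{Im}\,M^{+}$ on the cut is a period of $\omega$ over the vanishing cycles, hence again a transcendental combination of elliptic integrals, and bounding the number of its zeros on $(-\infty,0)$ is a problem of essentially the same nature and difficulty as the original one. You leave it at \emph{I would bound the number of sign changes by Rolle}: the Picard--Fuchs system by itself yields no such bound across the intermediate singular point $h=-\tfrac14$, and the Chebyshev property established by Iliev concerns $\{I_0,I_2,I_4'\}$ on the positive axis, not the periods over $\delta_0,\delta_{\pm1}$ on the negative one. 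As written, the argument does not close, and without a quantitative bank estimate you cannot even certify the bound $4$.

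The paper sidesteps this difficulty by a normalization your sketch is missing: it applies the argument principle not to $M$ but to $F(h)=(4h+1)M(h)/I_0(h)$, which is legitimate since $I_0$ has no zeros in ${\cal D}$ (Lemma \ref{I0}) and $4h+1$ does not vanish there either. The Picard--Fuchs relation $(4h+1)I_4'=4hI_0+5I_2$ then gives $F=\alpha(h)\,I_2/I_0+\beta(h)$ with $\alpha,\beta$ affine in $h$, so that along the cut
$$
F^{+}(h)-F^{-}(h)=\alpha(h)\,\frac{W(h)}{|I_0(h)|^{2}},\qquad
W(h)=\det\left(\begin{array}{cc}\oint_{\gamma^+}x^2y\,dx & \oint_{\gamma^+}y\,dx\\ \oint_{\gamma^-}x^2y\,dx & \oint_{\gamma^-}y\,dx\end{array}\right),
$$
and the Wronskian-type determinant $W$, being single-valued and of moderate growth, is a polynomial, computed to be $c\,h(4h+1)$. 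Hence $\mathrm{Im}\,F$ vanishes at most twice on the open cut (at $-\tfrac14$ and at the root of $\alpha$), the circle contributes an argument increase close to $3\pi$, and the count closes at four. The essential idea is that dividing by a non-vanishing period reduces the boundary behaviour to an explicitly computable rational function; without that step (or an equivalent device) your bank estimate remains an assertion rather than a proof.
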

\begin{lem}
The Abelian integrals $I_0(h)$ and $I_0'(h)$ do not vanish in ${\cal D}$.
\label{I0}
\end{lem}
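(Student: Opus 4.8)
The plan is to exploit the Picard--Fuchs equation $4h(4h+1)I_0''(h) = -3 I_0(h)$ together with the asymptotic expansion of $I_0$ near $h=0$ from Lemma \ref{14}, and to control the behaviour of $I_0$ at the other singular point $h=-1/4$ and at infinity. First I would record the sign data at the real boundary points: from Lemma \ref{14} we have $I_0(0^+) = 4/3 > 0$, and since $\gamma(h)$ is a genuine real oval for $h>0$ traversed in the orientation induced by $X_0$, the integral $I_0(h) = \oint_{\gamma(h)} y\,dx$ equals (up to the fixed sign convention) the area enclosed, hence $I_0(h)>0$ for all $h>0$; similarly the behaviour as $h\to+\infty$ is monotone and $I_0$ stays bounded away from zero on $(0,\infty)$. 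Near $h=-1/4$ the coefficient $4h+1$ vanishes, but the expansion of $I_0$ there (obtained from the same Picard--Fuchs equation, or from Picard--Lefschetz applied to the cycle $\delta_0$ which does not vanish at $h=-1/4$) shows that $I_0$ extends continuously and does not vanish at that point either.

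Next I would run the argument principle (Petrov's method) on the boundary of the domain $\mathcal D = \C\setminus(-\infty,0]$. The number of zeros of $I_0$ in $\mathcal D$ equals $\frac{1}{2\pi}$ times the increment of $\arg I_0$ along the boundary contour, which consists of the two banks $I_0^+$ and $I_0^-$ of the cut $(-\infty,0]$, plus small circles around $0$ and large circles near infinity. On the cut, the monodromy relations (\ref{gplus})--(\ref{d1}) let me compute $I_0^+(h) - I_0^-(h)$ explicitly: for $h\in(-1/4,0]$ this difference is $2\int_{\delta_0^+(h)} y\,dx$, and for $h\in(-\infty,-1/4]$ one has $I_0^-(h) = -\int_{\delta_0^+(h)} y\,dx$ while $I_0^+(h) = \int_{\delta_0^+(h)}y\,dx$ (using $\gamma^\pm$ from (\ref{gplus}), (\ref{gminus2})), so the imaginary part of $I_0$ on each bank is, up to sign, a single period $\oint_{\delta_0} y\,dx$. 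The key point is that this "vanishing-cycle" period has a definite sign (it is a real integral over a real segment between consecutive roots of $x^4/4 - x^2/2 - h$), so $I_0^\pm$ never crosses the real axis transversally in a way that contributes extra winding; the argument of $I_0$ along each bank stays confined to a half-plane.

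The contribution from the small circle around $h=0$ is controlled by the leading term $I_0 \sim 4/3$, giving zero winding, and the contribution near infinity is controlled by the asymptotics of $I_0$ as $|h|\to\infty$ in $\mathcal D$ — here I would use the Picard--Fuchs equation $4h(4h+1)I_0'' = -3I_0$ to get that $I_0(h) \sim c\, h^{1/4}$ (a hypergeometric-type growth), so that $\arg I_0$ changes by a bounded amount, namely $\tfrac14$ of the $2\pi$ turn of $h$, i.e.\ by $\pi/2$ in total over the large semicircle, contributing nothing to the integer count. Assembling these pieces, the total increment of $\arg I_0$ around $\partial\mathcal D$ is $0$, so $I_0$ has no zeros in $\mathcal D$. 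Finally, for $I_0'$ I would argue the same way: differentiating the asymptotic expansion gives $I_0'(0^+) = -\ln h + O(1) \to +\infty$ with the dominant term real and of fixed sign near the cut, the Picard--Fuchs equation gives $I_0' \sim c' h^{-3/4}$ at infinity, and the monodromy of $I_0'$ is again $\pm$ a single real vanishing period; the same boundary-argument computation yields zero winding.

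The main obstacle I anticipate is the estimate near infinity: one must verify that the (multivalued) behaviour $I_0 \sim c h^{1/4}$, $I_0' \sim c' h^{-3/4}$ is genuinely attained with a nonzero constant and no oscillation, so that the argument of $I_0$ on the large semicircle is monotone and stays within an interval of length $<2\pi$. This requires either a careful asymptotic analysis of the solutions of the Picard--Fuchs equation at the irregular-looking point $h=\infty$ (in fact a regular singular point, since $4h(4h+1)I_0'' + 3I_0=0$ has indicial exponents $0$ and $1/4$ at $\infty$), or an explicit hypergeometric representation of $I_0$; one must also check that the subdominant solution does not interfere. The sign of the vanishing-cycle periods on the cut is elementary once the orientations fixed in Section \ref{monodromy} are pinned down, but it does require bookkeeping consistent with (\ref{gplus})--(\ref{d1}).
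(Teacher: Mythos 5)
Your overall strategy (Petrov's argument principle on $\partial({\cal D}\cap\{|h|<R\})$) is in the right family, but you apply it to the wrong function, and this is where the proposal breaks down. The paper applies the argument principle not to $I_0$ but to the ratio $F=I_0/I_0'$ (after disposing of $I_0'$ in one line: $I_0'$ is the period of the holomorphic form $dx/y$ over a nonzero cycle on the elliptic curve $\Gamma_h$, hence never vanishes). The point of taking the ratio is that on the cut
$F^+-F^-=W(h)/|I_0'|^2$, where $W$ is the Wronskian of $\oint y\,dx$ and $\oint dx/y$ over $\gamma^{\pm}$; being single-valued it is rational, and in fact a nonzero constant on each of $(-\infty,-1/4)$ and $(-1/4,0)$, so $\mathrm{Im}\,F^{\pm}$ provably never vanishes on the cut. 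Working with $I_0$ directly, as you do, you must instead show that $\mathrm{Im}\,I_0^{\pm}$ never vanishes on $(-\infty,0)$, and your justification fails on $(-\infty,-1/4)$: there $\gamma^+-\gamma^-=2\delta_0^++\delta_1^++\delta_{-1}^+$ (not $2\delta_0^+$), and, more seriously, for $h<-1/4$ all four roots of $x^4/4-x^2/2-h$ are non-real, there are no real ovals, and the relevant period is \emph{not} ``a real integral over a real segment with a definite sign.'' Its non-vanishing is precisely the transcendental fact the Wronskian trick is designed to circumvent; asserting it is the gap.

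A second, quantitative error: your exponents at $h=\infty$ are wrong. The indicial equation of $16h^2u''+3u=0$ is $16\rho(\rho-1)+3=0$, with roots $1/4$ and $3/4$, and the dominant behaviour is $I_0(h)\sim c\,h^{3/4}$ (consistent with the direct estimate $x\sim h^{1/4}$, $y\sim h^{1/2}$ for the exterior oval), not $h^{1/4}$; likewise $I_0'\sim c'h^{-1/4}$. With the correct exponent the large circle contributes about $\tfrac32\pi$ to the variation of $\arg I_0$, and ``no sign change of $\mathrm{Im}\,I_0^{\pm}$ on the cut'' by itself only yields $Z\le 2$; to close the count to $Z=0$ you must pin down the argument at the two ends of each bank ($\arg I_0\to\tfrac34\pi$ at $-\infty$, $\arg I_0\to 0$ at $0$ since $I_0(0)=4/3$), or invoke the conjugation symmetry together with $I_0>0$ on $(0,\infty)$. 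Neither step appears in your write-up. Finally, your separate Petrov argument for $I_0'$ is unnecessary (the elliptic-period argument is immediate) and inherits both of the above problems.
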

\begin{proof}[Proof of Lemma \ref{I0}]
$I_0'(h)$ is a period of the holomorphic one-form $\frac{dx}{y}$ on the elliptic curve $\Gamma_h$, and therefore does not vanish. For real values of $h$ $ I_0'(h)$ represents the period of the orbit $\gamma(h)$ of $d H=0$, while 
$I_0(h)$ equals the area of the interior of $\gamma(h)$.
It is remarkable, that $I_0(h)$ does not vanish in a complex domain too.
Indeed, consider  the analytic function
 $$F(h)= \frac{I_0(h)}{I'_0(h)} , h \in \cal D .$$
 We shall count its zeros in ${\cal D}$ by making use of the argument principle.
\begin{quote}
{\it Let $D\subset\C$ be a relatively compact domain, with a piece-wise smooth boundary.
We suppose, that $f:D\rightarrow\C$ is a continuous function, which is complex-analytic in $D$, except at a finite number of points on the border $\partial D$. We suppose also that $f$ does not vanish on $\partial D$.
Denote by $Z_{D}(f)$ the number of zeros of $f$ in $D$, counted with  multiplicity.
The increment of the argument
\noindent $Var_{\partial D}(argf)$ of $f$ along $\partial D$ oriented counter-clockwise is well defined 
and equals the winding number of the curve $f(\partial D)\subset \C$ about the origin, divided by $2\pi$. The argument principle states then that }
\begin{eqnarray}\label{10}
2\pi Z_{D}(f)&=&Var_{\partial D}(argf)
\end{eqnarray}
\end{quote}
Apply now the argument principle to the function $F$ in the intersection of a big disc with a radius $R$ and the complex domain ${\cal D}$. Along the  circle of radius $R$, for $R$ sufficiently big, the decrease of the argument of $F$ is close to $2\pi$, while along the branch cut $(-\infty, 0)$ we have
$$
2 \sqrt{-1} Im(F(h) = F^+(h)-F^-(h)= 
\frac{I_{0}(h)}{I'_{0}(h)}-\frac{\overline{I_{0}(h)}}{\overline{I'_{0}(h)}}$$
$$
=
\frac{\oint_{\gamma^+}ydx}{\oint_{\gamma^+}\frac{dx}{y}}-\frac{\oint_{\gamma^{-}}ydx}{\oint_{\gamma^{-}}\frac{dx}{y}}=
\frac{W(h)}{|\oint_{\gamma^+}\frac{dx}{y}|^2} .
$$
where
$$
W(h)=det\left(\begin{array}{cc} \oint_{\gamma^+}ydx & \oint_{\gamma^+}\frac{dx}{y}\\ & \\ \oint_{\gamma^-}ydx & \oint_{\gamma^-}\frac{dx}{y} \end{array}\right) .
$$
According to section \ref{monodromy}, the function has two different determinations along $(-\infty,-1/4)$ and $(-1/4,0)$, both of which have no monodromy, and hence are rational in $h$. In fact, (\ref{area}) implies that $W(h)$ is a non-zero constant. If $W(h)=c$ in $(-\infty,-1/4)$, then it equals $2c$ in $(-1/4,0)$. Therefore along the branch cut
the argument of $F^+$ or $F^-$ increases by at most $\pi$. 
Summing up the above information, we conclude that $F$ has no zeros in ${\cal D}$.
\end{proof}
\begin{proof}[Proof of Lemma \ref{z1}]
We denote
$$
F(h) = \frac{M_1(h}{I_0(h)} =  \lambda_2 \frac{I_2(h}{I_0(h)} + \lambda_0, h\in \cal D 
$$
and apply, as in the proof of Lemma \ref{I0}, the argument principle to $F$. Along a big circle the increase of the  argument of $F$ is close to $\pi$. Along the branch cut $(-\infty,0]$ we have  
$$
2 \sqrt{-1} Im(F(h)) = F^+(h)-F^-(h)=  \lambda_2 \frac{W(h)}{|I_0(h)|^2}
$$
where 
$$
W(h)=
\det\left(\begin{array}{cc} \oint_{\gamma^+}yx^2dx & \oint_{\gamma^+} ydx \\ & \\ \oint_{\gamma^-}yx^2dx & \oint_{\gamma^-}ydx \end{array}\right) = c h (4h+1), c= const.\neq 0 .
$$
Therefore the imaginary part of $F(h)$ along the branch cut $(-\infty,0)$ vanishes at most once, at $-1/4$.
Summing up the above information, we get that $F$ has at most two zeros in the complex domain ${\cal D}$.
\end{proof}
\begin{proof}[Proof of Lemma \ref{z3}]
We denote
$$
F(h) = (4h+1) \frac{M(h}{I_0(h)} , h\in \cal D 
$$
and apply, as in the proof of Lemma \ref{I0}, the argument principle to $F$.  By making use of (\ref{I4}) we have
\begin{equation}
F(h)= \alpha(h) \frac{I_2(h}{I_0(h)} + \beta(h)
\label{fh}
\end{equation}
where 
\begin{equation}
\alpha(h)= (4h+1) \lambda_2 + 5 \lambda_4, \; \beta(h)= (4h+1) \lambda_0+ 4 h \lambda_4 .
\label{albe}
\end{equation}
Along a big circle the increase of the  argument of $F$ is close to $3\pi$. Along the branch cut $(-\infty,0]$ we have as before
$$
2 \sqrt{-1} Im(F(h)) = F^+(h)-F^-(h)=  \alpha(h) \frac{W(h)}{|I_0(h)|^2}
$$
where 
$$
W(h)=
\det\left(\begin{array}{cc} \oint_{\gamma^+}yx^2dx & \oint_{\gamma^+} ydx \\ & \\ \oint_{\gamma^-}yx^2dx & \oint_{\gamma^-}ydx \end{array}\right) = c h (4h+1), c= const.\neq 0 .
$$
Therefore the imaginary part of $F(h)$ along the branch cut $(-\infty,0)$ vanishes at most twice, at $-1/4$ and at the root of $\alpha(h)$.
Summing up the above information, we get that $F$ has at most four zeros in the complex domain ${\cal D}$.
\end{proof}

\section{The bifurcation diagram of the zeros of the Abelian integrals in a complex domain}
\label{section5}
\subsection{ The first Melnikov function $M_1$}
Let $Z(M_1)$ be the number of the zeros of $M_1(h)$ in the domain ${\cal D}$, counted with multiplicity. It is a function of $[\lambda_0:\lambda_2]$ seen as a point on the projective circle $S^1=\R\mathbb P^1$. The bifurcation set ${\mathbf B}$ of $Z(M_1)$ is the set of points
$[\lambda_0:\lambda_2]\in \R\mathbb P^1$ at which $Z(M_1)$ is not a locally constant function. It follows that if $\lambda=[\lambda_0:\lambda_2]$ is a bifurcation point, then near   $\lambda$ a zero of $M_1(h)$ bifurcates from the border of the domain ${\cal D} \subset \C{\mathbb P}^1$, see \cite[Definition 2]{Gav01}. 
Therefore
$$
{\bf B} = P_0\cup P_{-1/4}\cup P_\infty \cup \Delta
$$ 
where $P_0, P_{-1/4},  P_\infty\in S^1$ are the sets of parameter values $\lambda$, corresponding to bifurcations of zeros from $h=0$, $h=-1/4$ and $h=\infty$ respectively. Finally, $\Delta $ is the set corresponding to bifurcations from the branch cut $(-\infty,0)$. The results of the preceding section imply $\Delta= \emptyset$ while
$$
P_0= \{ [\lambda_0 :\lambda_2] : \lambda_0 I_0(0)+\lambda_2 I_2(0) = 0 \}, P_{-1/4}= \{ [\lambda_0 :\lambda_2] : \lambda_0 I_0(-\frac14)+\lambda_2 I_2(-\frac14) = 0 \}
$$
and
$$
P_\infty =  \{ [\lambda_0 :\lambda_2] : \lambda_2=0 \} .
$$
A local analysis shows that when the parameter $[\lambda_0:\lambda_2]$ crosses $P_0$ or $P_\infty$, then a simple zero bifurcates from $0$ or $\infty$. Similarly, two complex conjugate zeros bifurcate from $h=-1/4$ when 
$[\lambda_0:\lambda_2]$ crosses $P_{-1/4}$. This combined with Lemma \ref{z1} implies
\begin{coro}
The bifurcation diagram of $Z(M_1)$ together with the corresponding number of zeros  of $M_1$ are shown on fig.\ref{diagram2}
\end{coro}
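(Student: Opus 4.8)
The plan is to read the diagram off the data already assembled, so that the proof is essentially bookkeeping. By the discussion preceding the corollary, the bifurcation set of $Z(M_1)$ on $S^1=\R\mathbb P^1$ is ${\bf B}=P_0\cup P_{-1/4}\cup P_\infty$ (since $\Delta=\emptyset$); the first step is to record that these three points are pairwise distinct. From $I_0(0)=\frac43$ and $I_2(0)=\frac{16}{15}$ (Lemma \ref{14}) one gets $P_0=\{5\lambda_0+4\lambda_2=0\}$, while $P_\infty=\{\lambda_2=0\}$; evaluating $I_0,I_2$ at the singular value $h=-1/4$, e.g.\ on the nodal limit curve $y^2=-\frac12(x^2-1)^2$, gives $P_{-1/4}=\{5\lambda_0+\lambda_2=0\}$. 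In the slope coordinate $\lambda_2/\lambda_0$ these are $0$, $-\frac54$ and $-5$, three distinct points, so they cut $S^1$ into three open arcs. On each arc $Z(M_1)$ is constant (off ${\bf B}$ it is locally constant by the definition of ${\bf B}$), and by Lemma \ref{z1} together with $Z(M_1)\ge 0$ its value lies in $\{0,1,2\}$.

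It remains to identify the three values. I would anchor the count at the parameter $\lambda_0=1$, $\lambda_2>0$ small, which lies on one of the three arcs, say $A$: running the argument principle exactly as in the proof of Lemma \ref{z1}, the increment of $\arg F$ along the large circle ($\approx\pi$) is cancelled by the increment along the branch cut, so the winding number is $0$ and $Z(M_1)=0$ there; hence $Z(M_1)\equiv 0$ on $A$, which is bounded by $P_\infty$ and $P_{-1/4}$. Equivalently, on $A$ the integral $M_1$ is a small perturbation of the zero-free $\lambda_0 I_0$ (Lemma \ref{I0}) whose behaviour near $h=\infty$ is governed by $\lambda_2 I_2$, itself zero-free there. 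The other two values are then forced by the local bifurcation analysis stated just above the corollary, together with $0\le Z(M_1)\le 2$. Crossing $P_\infty$ out of $A$, a simple zero enters ${\cal D}$ from $h=\infty$: with $I_0\sim C_0 h^{3/4}$, $I_2\sim C_2 h^{5/4}$ ($C_0,C_2>0$) as $h\to+\infty$, the relevant root lies near $h^{1/2}=-\lambda_0 C_0/(\lambda_2 C_2)$, which belongs to ${\cal D}$ precisely for $\lambda_2/\lambda_0<0$; so $Z(M_1)=1$ on the next arc. Crossing $P_0$, a simple zero enters from $h=0$: in the expansion of $M_1$ at $0$ from Lemma \ref{14} the term $-\lambda_0 h\ln h$ dominates, and its root sits on the positive real axis for the parameters on the side of $P_0$ where $\lambda_0 I_0(0)+\lambda_2 I_2(0)$ has sign opposite to $\lambda_0$; so $Z(M_1)=2$ on the third arc. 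Finally, crossing $P_{-1/4}$ back to $A$, the conjugate pair of zeros that had emerged near $h=-1/4$ leaves ${\cal D}$ and the count returns to $0$; the three signed jumps $+1,+1,-2$ sum to zero around $S^1$, as they must. This is exactly fig.\ref{diagram2}.

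The genuine work is concentrated in the local analysis at $h=-1/4$. Unlike $h=0$, where Lemma \ref{14} already gives the needed logarithmic expansion, $-1/4$ is the singular value of $H$ at which both cycles $\delta_1,\delta_{-1}$ vanish, so one must first establish the analogous Puiseux expansion of $I_0$ and $I_2$ there in order to verify that, as $[\lambda_0:\lambda_2]$ crosses $P_{-1/4}$, exactly one conjugate pair of simple zeros of $M_1$ crosses the boundary of ${\cal D}$ (and to locate $P_{-1/4}$). I would stress, however, that once the three jump magnitudes $1,1,2$ are known, the inequality $0\le Z(M_1)\le 2$ from Lemma \ref{z1} together with the single anchor $Z(M_1)=0$ already pins down the whole diagram; the signs of the jumps then only decide which arc carries which count, and they are in any case consistent with this combinatorics.
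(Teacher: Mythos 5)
Your proposal is correct and follows essentially the same route as the paper: identify ${\bf B}=P_0\cup P_{-1/4}\cup P_\infty$ with $\Delta=\emptyset$, determine how zeros enter or leave ${\cal D}$ at $h=0$, $h=-1/4$ and $h=\infty$, and combine this with the bound of Lemma \ref{z1}. You actually supply more detail than the paper does --- the explicit slopes $0,-5/4,-5$ of the three bifurcation points, the anchor $Z(M_1)=0$ on the arc containing $\lambda_2/\lambda_0>0$ small, and the correct observation that the jump magnitudes $1,1,2$ plus the bound alone would still allow the two assignments $(0,1,2)$ and $(2,1,0)$, so an anchor or the jump signs are genuinely needed --- while the one step you leave open, the expansion of $I_0,I_2$ at $h=-1/4$ showing that exactly one conjugate pair crosses the boundary there, is precisely the step the paper itself only asserts as ``a local analysis''.
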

\begin{figure}
\begin{center}
 \def\svgwidth{9cm}
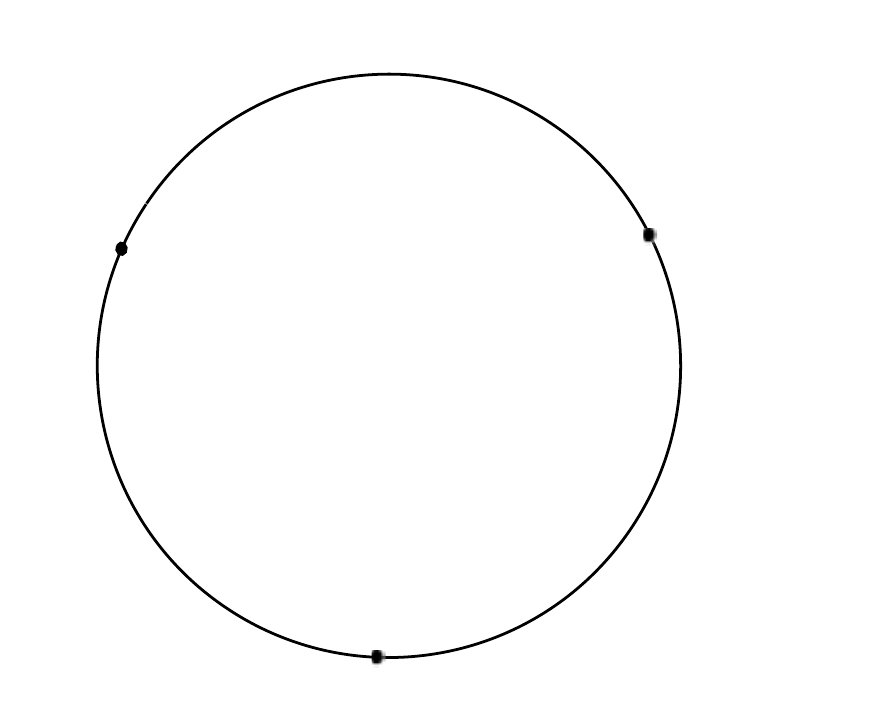
\end{center}
\caption{Bifurcation diagram of the zeros of the first Melnikov function $M_1$ in the complex domain ${\cal D}$.}
\label{diagram2}
\end{figure}
\subsection{The principal part $M$ of the first return map.}
\label{bifurcation}
\begin{figure}
\begin{center}
 \def\svgwidth{8cm}
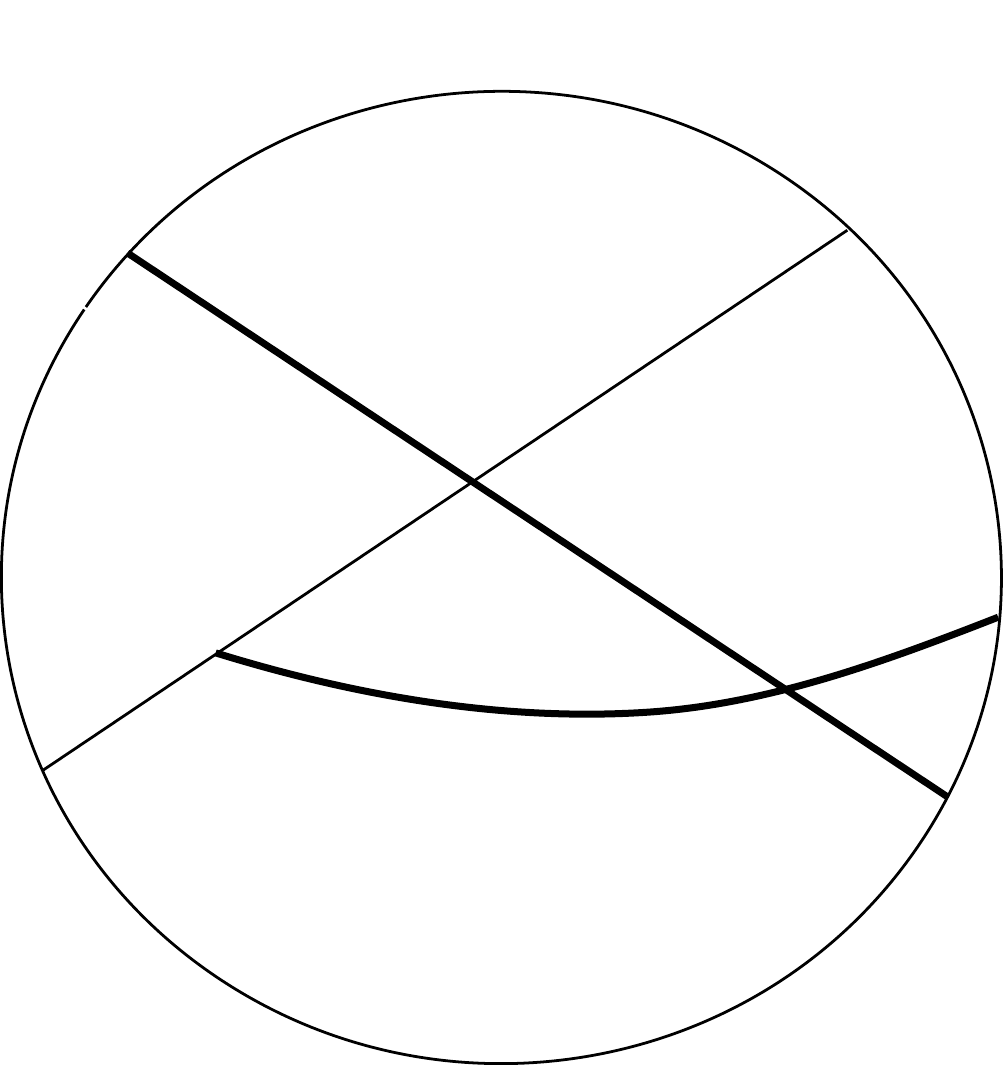
\end{center}
\caption{Bifurcation diagram of the zeros of  $M$ in the complex domain ${\cal D}$.}
\label{bset}
\end{figure}
Let $Z(M)$ be the number of the zeros of $M(h)$ in the domain ${\cal D}$, counted with multiplicity. It is a function of $[\lambda_0:\lambda_2: \lambda_4]$ seen as a point on the projective sphere $\R\mathbb P^2$. The bifurcation set ${\mathbf B}$ of $Z(M)$ is the set of points
$[\lambda_0:\lambda_2: \lambda_4]\in \R\mathbb P^2$ at which $Z(M)$ is not a locally constant function. It follows that if $\lambda=[\lambda_0:\lambda_2: \lambda_4]$ is a bifurcation point, then near such a $\lambda$ a zero of $M(h)$ bifurcates from the border of the domain ${\cal D} \subset \C{\mathbb P}^1$, see \cite[Definition 2]{Gav01}. 
Therefore
$$
{\bf B} = l_0\cup l_{-1/4}\cup l_\infty \cup \Delta
$$ 
where $l_0, l_{-1/4},  l_\infty$ are the sets of parameter values $\lambda$, corresponding to bifurcations of zeros from $h=0$, $h=-1/4$ and $h=\infty$ respectively. Finally, $\Delta $ is the set corresponding to bifurcations from the branch cut $(-\infty,0)$. We are going to describe these sets explicitly.

As $I'_4(0) \neq 0$, then 
\begin{equation}
l_0= \{\lambda\in \R\mathbb P^2 : \lambda_0 I_0(0) + \lambda_2 I_2(0)+\lambda_4 I'_4(0) = 0 \}.
\label{l0}
\end{equation}
Similarly, 
\begin{equation}
l_\infty = \{ \lambda\in \R\mathbb P^2 :  \lambda_2 = 0 \} .
\label{linfty}
\end{equation}
A local analysis shows that $I'_4(h) \sim const. \times \log(4h+1)$ near $-1/4$ which implies
\begin{equation}
l_{-1/4} = \{ \lambda\in \R\mathbb P^2 : \lambda_4 = 0 \} .
\label{l14}
\end{equation}
Finally, to compute $\Delta$ we suppose that for some $h\in (-\infty,-1/4) \cup (-1/4,0)$, $M(h)=\overline{M(h)}=0$. The latter implies $Im( I(h))=0$, and hence $\alpha(h)=0$ and $\beta(h)=0$, see (\ref{fh}). The condition, that the polynomials $\alpha(h), \beta(h)$ have a common real root imply that either $\lambda_4=0$ in which case the root is $h=-1/4$, or 
\begin{equation}
5(\lambda_0+\lambda_4) + \lambda_2=0
\label{delta}
\end{equation}
in which case
$$
(4h+1) (\lambda_0I_2(h)+\lambda_2I_2(h)+\lambda_4I'_4(h))=
(4h(\lambda_0+\lambda_4) + \lambda_0) (I_0(h)-5I_2(h))
$$
see (\ref{albe}). The Abelian integral $I_0(h)-5I_2(h)$ vanishes at $h=-1/4$ and corresponds therefore to the point $P_{-1/4}$ on fig.\ref{diagram2}. In particular it has no zeros in the domain ${\cal D}$. Thus, in the case when the root of $4h(\lambda_0+\lambda_4) + \lambda_0$ belongs to ${\cal D}$, the Abelian integral $M$ has exactly one zero in ${\cal D}$, otherwise it has complex conjugate zeros on $(-\infty,0)$. This implies from one hand that $\Delta$ is the segment of the line (\ref{delta}), connecting $l_\infty$ and $l_0$ as on fig.\ref{bset}. Thus also implies that in one of the connected components of  $ \R\mathbb P^2\setminus \mathbb B$ the fundtion $M$ has exactly one zero, as shown on  fig.\ref{bset}.
To determine the number od the zeros of $M$ in the remaining connected components of the complement to the bifurcation set in $ \R\mathbb P^2$ we note that
\begin{itemize}
\item when crossing $\Delta$ or $l_{-1/4}$ (in bold on the figure) two zeros are added or subtracted 
\item  when crossing $l_0$ or $l_{\infty}$  one simple zero is added or subtracted 
\item the total number of zeros of $M$ is not bigger than three
\end{itemize}
The above considerations, combined with Lemma \ref{z3}, determine uniquely the number of the zeros of $M$ in each connected component. This is summarized in the following
\begin{thm}
\label{main}
 The bifurcation set ${\bf B}\subset {\mathbb P}^2$ of the zeros $Z(M)$ of the  principal part of the return map,  in the complex domain ${\cal D}\subset {\mathbb C}$ is the union of the projective lines $l_0,  l_{-1/4},  l_\infty $ and the segment $\Delta$ connecting $l_0$ to $l_\infty$. Their mutual position, together with the corresponding number of zeros  of $M$ are shown on fig.\ref{bset}.
\end{thm}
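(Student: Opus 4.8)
The plan is to carry out the bifurcation-diagram program of \cite{Gav01}. In outline: (i) use the general fact that $Z(M)$ can change only when a zero of $M$ escapes through the boundary of ${\cal D}$ in $\C{\mathbb P}^1$, which consists of $\{0\}$, $\{-1/4\}$, $\{\infty\}$ and the open branch cut $(-\infty,0)$, to write $\mathbf B=l_0\cup l_{-1/4}\cup l_\infty\cup\Delta$; (ii) identify each of the four loci explicitly; (iii) compute the jump of $Z(M)$ across each; and (iv) integrate these jumps starting from strata on which $Z(M)$ is already known, namely the line $\{\lambda_4=0\}$, where $M=M_1$ and the diagram of fig.~\ref{diagram2} applies, and the sub-locus of the line (\ref{delta}) on which $M$ factorises.

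\emph{Step (ii): the four loci.} For $l_0$: by Lemma~\ref{14} the integrals $I_0,I_2,I_4'$ are finite at $h=0$ with $I_4'(0)\ne0$, so a zero sits at $h=0$ exactly when $M(0)=0$, giving the line (\ref{l0}). For $l_\infty$: a rescaling $x\mapsto h^{1/4}x$, $y\mapsto h^{1/2}y$ shows $I_2(h)\sim c\,h^{5/4}$ while $I_0(h),I_4'(h)=O(h^{3/4})$, so $M$ has a zero escaping to $\infty$ exactly when $\lambda_2=0$, giving (\ref{linfty}). For $l_{-1/4}$: using the Picard--Fuchs relation $(4h+1)I_4'=4hI_0+5I_2$ together with the identity $I_0(-1/4)=5I_2(-1/4)$, and the boundedness of $I_0,I_2$ near $-1/4$, one gets $I_4'(h)\sim\mathrm{const}\cdot\log(4h+1)$; hence $M$ is unbounded at $-1/4$ precisely when $\lambda_4\ne0$, and the conjugate pair of zeros produced by this logarithm crosses $h=-1/4$ exactly as $\lambda_4$ changes sign, giving (\ref{l14}). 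For $\Delta$: as in the proof of Lemma~\ref{z3}, writing $F=(4h+1)M/I_0$ in the form (\ref{fh}) with $\alpha,\beta$ as in (\ref{albe}) and $F^+-F^-=\alpha(h)W(h)/|I_0|^2$, $W(h)=c\,h(4h+1)$, a zero of $M$ on the cut forces $\alpha$ and $\beta$ to vanish at a common real point; this happens either for $\lambda_4=0$ (common root $h=-1/4$) or on the line (\ref{delta}), on which $\alpha=-5\beta$ and hence
\[
(4h+1)M(h)=\beta(h)\,\bigl(I_0(h)-5I_2(h)\bigr),\qquad \beta(h)=4h(\lambda_0+\lambda_4)+\lambda_0 .
\]
Since $I_0-5I_2$ is the $P_{-1/4}$-member of the family of Lemma~\ref{z1} and so has no zero in ${\cal D}$, along the line (\ref{delta}) the only possible zero of $M$ in ${\cal D}$ is the root $h^\ast=-\lambda_0/(4(\lambda_0+\lambda_4))$ of $\beta$; thus $Z(M)=1$ on the sub-arc where $h^\ast>0$ (which is therefore disjoint from $\mathbf B$) and the pair of zeros lies on the cut on the complementary sub-arc, which is exactly the segment $\Delta$, its endpoints being $h^\ast=0$ on $l_0$ and $h^\ast=\infty$ on $l_\infty$.

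\emph{Steps (iii)--(iv): the counts.} Local analyses at the four loci give: a single simple real zero enters or leaves ${\cal D}$ through $h=0$ across $l_0$ and through $h=\infty$ across $l_\infty$, so $Z(M)$ jumps by $\pm1$; a conjugate pair enters or leaves through $h=-1/4$ across $l_{-1/4}$ and through the cut across $\Delta$, so $Z(M)$ jumps by $\pm2$. Then fix the count where it is known: along $\{\lambda_4=0\}$ one has $M=M_1$, so fig.~\ref{diagram2} gives $Z(M)\in\{0,1,2\}$ on the three arcs of that line, and on the $\{h^\ast>0\}$-arc of the line (\ref{delta}) one has $Z(M)=1$. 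Propagating across the walls $l_0,l_{-1/4},l_\infty,\Delta$ with the above jump sizes, and using $0\le Z(M)\le4$ (Lemma~\ref{z3}), assigns to every component of $\R{\mathbb P}^2\setminus\mathbf B$ a unique value; reading these off reproduces fig.~\ref{bset}, and in particular shows that the count never exceeds $3$, which is the assertion of the theorem.

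\emph{Main obstacle.} The hard part is the local bifurcation analysis at the singular levels, above all at $h=-1/4$, where $I_0,I_2,I_4'$ all carry logarithmic ramification, and the sign bookkeeping needed to orient each jump correctly (i.e.\ to decide on which side of a wall $Z(M)$ is the larger one); the remaining delicate point is to pin down the precise extent of $\Delta$ — that it terminates on $l_0$ and on $l_\infty$ and that the rest of the line (\ref{delta}) lies outside $\mathbf B$ — together with the final consistency check that no component of the complement of the diagram carries four zeros.
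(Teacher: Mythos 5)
Your proposal follows essentially the same route as the paper: the same decomposition ${\bf B}=l_0\cup l_{-1/4}\cup l_\infty\cup\Delta$, the same identification of each locus (including the factorisation $(4h+1)M=\beta\,(I_0-5I_2)$ on the line (\ref{delta}) and the observation that $I_0-5I_2$ is the $P_{-1/4}$-member with no zeros in ${\cal D}$), and the same propagation of the jumps $\pm1$, $\pm2$ across the walls subject to the bound of Lemma \ref{z3}. The only differences are cosmetic — you anchor the count additionally on the line $\{\lambda_4=0\}$ via fig.~\ref{diagram2} and supply an explicit scaling argument for $l_\infty$ that the paper merely asserts — and the points you flag as remaining obstacles are left at the same level of detail in the paper itself.
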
  

The bound for the number of the zeros in the above Corollary  in the complex domain ${\cal D}$ is three, 
which, according to Theorem \ref{cyclicity}, 
is not optimal on the real interval $(0,\infty)$. It seems impossible to deduce Theorem \ref{cyclicity} from Theorem \ref{main} by making use of complex methods only.


\begin{thebibliography}{}





\bibitem{Dum03b} F. Dumortier, C. Li, Perturbations from an elliptic Hamiltonian of degree four: IV. Figure eight-loop. {\it J. Differential Eq.} {\bf 188}, 512--554 (2003)


\bibitem{gavr98}
Lubomir Gavrilov.
\newblock Petrov modules and zeros of {A}belian integrals.
\newblock {\em Bull. Sci. Math.}, 122(8):571--584, 1998.


\bibitem{Gav01} L. Gavrilov, The infinitesimal 16th Hilbert problem in the quadratic case. {\it Invent. Math.} {\bf 143} (3), 449--497 (2001)


\bibitem{Gav11} L. Gavrilov, On the number of limit cycles which appear by perturbation of Hamiltonian two-saddle cycles of planar vector fields. {\it Bull. Braz. Math. Soc., New Series} {\bf 42} (1), 1--23 (2011)





\bibitem{Iliev98} I.D. Iliev, On second order bifurcations of limit cycles. {\it J. London Math. Soc.} {\bf 58} (2), 353--366 (1998)

\bibitem{Iliev99} I.D. Iliev, L.M. Perko, Higher order bifurcations of limit cycles. {\it J. Differential Eq.} {\bf 154}, 339--363 (1999)

\bibitem{jezo94}
J.~M. Jebrane and H.~{\.Z}o{\l}adek.
\newblock Abelian integrals in nonsymmetric perturbation of symmetric
  {H}amiltonian vector field.
\newblock {\em Adv. in Appl. Math.}, 15(1):1--12, 1994.

\bibitem{Li06} C. Li, P. Marde\u{s}ic, R. Roussarie, Perturbations of symmetric elliptic Hamiltonians of degree four. {\it J. Differential Eq.} {\bf 231}, 78--91 (2006)


\bibitem{Liu03} C. Liu, Estimate of the number of zeros of Abelian integrals for an elliptic Hamiltonian with figure-of-eight loop. {\it Nonlinearity} {\bf 16}, 1151--1163 (2003)

\bibitem{Pet86} G.S. Petrov, Elliptic integrals and their non-oscillatorness. {\it Funct. Anal. Appl.} {\bf 20}, 46--49 (1986)

\bibitem{Pet87} G.S. Petrov, Complex zeros of an elliptic integral. {\it Funct. Anal. Appl.} {\bf 21}, 87--88 (1987)

\bibitem{Pet90} G.S. Petrov, Nonoscillatorness of elliptic integrals. {\it Funct. Anal. Appl.} {\bf 24}, 45--50 (1990)

loop of planar vector fields. {\it Bol. Sot. Brasil. Mar.} {\bf 17}, 67--101 (1986)

\bibitem{Rou98} R. Roussarie, {\it Bifurcation of planar vector fields and Hilbert's sixteenth problem}. Volume {\bf 164} of {\it Progress in Mathematics}, Birkh\"{a}user Verlag, Basel, 1998


\bibitem{Zol91} C. Rousseau, H. \.Zo\l\k{a}dek, Zeroes of complete elliptic integrals for 1:2 resonance. {\it J. Differential Eq.} {\bf 94}, 41--54 (1991)

\end{thebibliography}
\end{document}